\title[Emergent boundary conditions]{Diffusion laws select boundary conditions}
\author{Jaywan Chung}
\address[Jaywan Chung]{Energy Conversion Research Center, Korea Electrotechnology Research Institute, 12, Jeongiui-gil, Changwon-si, Gyeongsangnam-do, 51543, Korea}
\email{jchung@keri.re.kr}
\author{Seungmin Kang}
\address[Seungmin Kang]{Department of Mathematical Sciences, KAIST,e 291 Daehak-ro, Yuseong-gu, Daejeon, 34141, Korea}
\email{sngmn20@gmail.com}
\author{Ho-Youn Kim}
\address[Ho-Youn Kim]{Computer,  Electrical  and  Mathematical  Science  and  Engineering  Division,  King  AbdullahUniversity  of  Science  and  Technology  (KAUST),  Thuwal  23955-6900,   Saudi  Arabia}
\email{ghsl0615@gmail.com}
\author{Yong-Jung Kim}
\address[Yong-Jung Kim]{Department of Mathematical Sciences, KAIST,e 291 Daehak-ro, Yuseong-gu, Daejeon, 34141, Korea}
\email{yongkim@kaist.edu}
\def\eps{\epsilon}
\def\L{\mathcal{L}}
\def\tri{\triangle}
\def\R{\mathbb{R}}
\def\erfc{\mathrm{erfc}}
\def\ds{\displaystyle}
\newtheorem{theorem}{Theorem}[section]
\newtheorem{lemma}{Lemma}[section]
\newtheorem{definition}{Definition}[section]
\numberwithin{equation}{section}
\begin{document}
\maketitle
\begin{abstract}
The choice of boundary condition makes an essential difference in the solution structure of diffusion equations. The Dirichlet and Neumann boundary conditions and their combination have been the most used, but their legitimacy has been disputed. We show that the diffusion laws may select boundary conditions by themselves, and through this, we clarify the meaning of boundary conditions. To do that we extend the domain with a boundary into the whole space by giving a small diffusivity $\eps>0$ outside the domain. Then, we show that the boundary condition turns out to be Neumann or Dirichlet as $\eps\to0$ depending on the choice of a heterogeneous diffusion law. These boundary conditions are interpreted in terms of a microscopic-scale random walk model.
\end{abstract}

\section{Introduction and main results}

Consider a diffusion equation on the half line $\R^+$,
\begin{equation}\label{1.1}
\begin{cases}
U_t =U_{xx},& t,x>0,\\
U(0,x) = \phi(x), & x>0,
\end{cases}
\end{equation}
where $U(t,x)$ is the density of diffusing particles, $U_t$ and $U_{xx}$ are partial derivatives, and $\phi\in L^\infty(\R^+)$ is a bounded initial value. The domain $\R^+$ has a boundary at $x=0$, and the composition of the problem is complete only after an appropriate boundary condition is given at $x=0$. The importance of boundary conditions in diffusion equations has been known for a long time. The Dirichlet and Neumann boundary conditions, and their combinations, have been the most used, but their legitimacy has been disputed. The purpose of the paper is to derive a boundary condition and discuss what natural boundary conditions are in what situations. 

The strategy of the paper is to view the problem \eqref{1.1} as the limit of a diffusion problem on the whole real line $\R$ when the diffusivity is given by
\begin{equation}\label{D}
D=D(x; \epsilon):=
\begin{cases}
\epsilon \quad& \text{if $x < 0$,}\\
1 \quad& \text{if $x > 0$.}
\end{cases}
\end{equation}
If we take the limit as $\eps\to0^+$, the diffusivity disappears in the $x<0$ region, and the solution will converge to a solution of \eqref{1.1} in the region $x>0$. The boundary behavior of the limit can be considered as the natural boundary condition we are looking for. One might think the diffusing particles are trapped in the $x>0$ region since the diffusion in the $x<0$ region disappears. If so, the Neumann boundary condition will be the desired boundary condition. However, the emerging boundary condition depends on the situation. 

The diffusion equation with a nonconstant diffusivity $D$ can be written generally as
\begin{equation}\label{1.3}
\begin{cases}
u_t = (D^{1-q} (D^{q}u)_x)_x,& t>0,\ x\in\R,\\
u(0,x) =\phi(x), & x\in\R,
\end{cases}
\end{equation}
where $\phi\in L^\infty(\R)$ and $q\in\R$. If $D$ is constant, the diffusion equations in \eqref{1.3} are identical for all $q\in\R$. If $q=0$, the diffusion equation  in \eqref{1.3} is called Fick's diffusion law \cite{Fick}. If $q\ne0$, it is called a Fokker-Plank diffusion law since it contains an advection phenomenon. For example, we may split the diffusion operator into Fick's law diffusion and an advection term, i.e.,
\[
u_t = (D^{1-q} (D^{q}u)_x)_x=(Du_x+quD_x)_x.
\]
The last term $quD_x$ gives a drift effect. In particular, if $q=0.5$, it is Wereide's diffusion law \cite{Wereide} and satisfied by the probability density function of a stochastic process under the Stratonovich integral. If $q=1$, it is Chapman's diffusion law \cite{Chapman} and satisfied by the probability density functions under the It\^o integral. 

There have been lots of debates and discussions on the correct diffusion equation in heterogeneous environments. The two cases with $q=0$ and $q=1$ are mostly compared (see \cite{Landsberg,Milligen}). Kampen \cite{Kampen88} and Schnitzer \cite{Schnitzer} claimed that there is no universal form of the diffusion equation, but each system should be studied individually. Landsberg and Sattin \cite{Sattin} claimed what matters is adding an appropriate convective term, not a diffusion law. Kampen \cite{Kampen81} clarified similar debates related to stochastic differential equations for probability density functions. These long-standing controversies seem to say that there is no correct exponent $q$ over others and we need two coefficients $D$ and $q$ depending on the situation. We may split the diffusivity into two coefficients in a heterogeneous environment and write the diffusion equation as \eqref{5.2}. We use this formation to explain why different boundary conditions emerge at the interface.

The typical definition of weak solution cannot be used for the problem \eqref{1.3}. Since the diffusivity $D$ in \eqref{D} is discontinuous at $x=0$, the solution of \eqref{1.3} is not a typical weak solution. For notational convenience, we denote the domain without the interface $x=0$ as
\[
\Omega_b :=\R\setminus\{0\}=\R^- \cup \R^+.
\]
We denote the regularity with respect to the time and space variables using a sub-index as in $C^1_t$ and $C^1_x$. The solution of \eqref{1.3} in the paper is defined as follows.\footnote{Another way to define the solution of \eqref{1.3} is in a weak sense. However, since the diffusivity $D$ is discontinuous at $x=0$, the regularity should be given to the product $D^qu$ and hence it is not a usual weak solution for $0<q<1$. If $q=0$, the solution can be defined in the usual weak sense. If $q=1$, the solution can be defined in a very weak sense.}

\begin{definition}\label{def:1.1}

A function $u \in C^1_t(\R^+\times\Omega_b)\cap C(\R^+\times\Omega_b)\cap L^\infty(\R^+\times\R)$ is called a solution of \eqref{1.3} if the followings hold:\\
(i) For all $t > 0$,
\begin{eqnarray}
\label{R1}
D^{q} u \in C^1_x(\R^+\times\Omega_b) \cap C(\R^+\times\R),\\
\label{R2}
\quad D^{1-q} (D^{q}u)_x \in C^1_x(\R^+\times\Omega_b) \cap C(\R^+\times\R).
\end{eqnarray}
(ii) Eq. \eqref{1.3} is satisfied in the classical sense for all $(t,x) \in \R^+ \times \Omega_b$.\\
(iii) The initial condition is satisfied pointwise except at the interface, i.e.,
\[
\lim_{t \to 0^+} u(t,x) = \phi(x),\quad x\in\Omega_b.
\]
\end{definition}

We call $D^qu$ the \emph{potential} and $D^{1-q} (D^{q}u)_x$ the \emph{flux}. Note that the two regularity conditions, \eqref{R1} and \eqref{R2}, are not given to the solution, but to the potential and the flux. Furthermore, if $q\ne1$, $D^{q} u$ has no chance to belong to $C^1_x(\R^+\times\R)$ since $D^{1-q} (D^{q}u)_x \in C(\R^+\times\R)$ and $D^{1-q}$ is discontinuous at $x=0$. The main results of the paper are in the following theorem.

\begin{theorem}[Boundary condition and regularity]\label{thm} Let $\phi\in C(\Omega_b)\cap L^\infty(\R)$, $\phi\ge0$, $q\in\R$, and $D$ be given by \eqref{D}. Then, the solution of \eqref{1.3} exists and is unique. The solution satisfies
\begin{equation}\label{Regularity}
D^qu_t \in C(\R^+\times \R) \quad \text{and} \quad D^q(D^{1-q} (D^qu)_x)_x \in C(\R^+\times\R).
\end{equation}
The solution $u(t,x;\eps)$ converges pointwise for all $x>0$ as $\eps\to0$, and the limit, denoted by $U(t,x)$, satisfies \eqref{1.1} and a boundary condition,
\begin{equation}\label{BC}
\begin{cases}
\ds\frac{\partial U}{\partial x}(t,0^+)=0,& \text{ if }\ q<0.5,\\
\ds\ \ U(t,0^+) =0,&\text{ if }\ q>0.5.
\end{cases}
\end{equation}
Furthermore, as $\epsilon \to 0$,
\begin{equation}\label{decay}
\begin{cases}
\ds \left| \frac{\partial u}{\partial x}(t,0^+;\epsilon) \right| = O(\epsilon^{0.5 - q}) & \text{ if }\ q<0.5,\\
\ds\ \ |u(t, 0^+; \epsilon)| = O(\epsilon^{q - 0.5}) &\text{ if }\ q>0.5.
\end{cases}
\end{equation}
\end{theorem}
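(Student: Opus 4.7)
Since $D$ in \eqref{D} is constant on each half-line, the PDE in \eqref{1.3} reduces to $u_t=u_{xx}$ on $(0,\infty)\times\R^+$ and $u_t=\eps u_{xx}$ on $(0,\infty)\times\R^-$, and the regularity conditions \eqref{R1}--\eqref{R2} translate at $x=0$ into the two transmission conditions
\begin{equation*}
u(t,0^+)=\eps^{q}\,u(t,0^-),\qquad u_x(t,0^+)=\eps\,u_x(t,0^-),
\end{equation*}
obtained from continuity of the potential $D^q u$ and the flux $D^{1-q}(D^q u)_x$. The whole problem therefore becomes a linear transmission problem for the classical heat equation with diffusivities $1$ and $\eps$ coupled only at the interface.

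For existence, uniqueness, and the regularity \eqref{Regularity}, my plan is to apply the Laplace transform in $t$. On each half line the equation becomes a second-order linear ODE with constant coefficients whose bounded solution in $x$ is a single decaying exponential $e^{\mp\sqrt{p/D}\,x}$ plus a particular piece driven by the initial data. The two transmission conditions then yield a $2\times 2$ linear system for the remaining unknown coefficients, which is uniquely solvable for every $\eps>0$, and Laplace inversion expresses $u$ as an explicit convolution of $\phi$ against a transmission Green's function built from the two half-line heat kernels. Uniqueness and continuity across $x=0$ follow from this representation. The regularity \eqref{Regularity} is essentially automatic from Definition \ref{def:1.1}, since continuity of $D^q u$ at $x=0$ implies continuity of $D^q u_t=(D^q u)_t$, and by the PDE this equals $D^q(D^{1-q}(D^q u)_x)_x$.

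The $\eps\to 0$ analysis is driven by the scaling mismatch between the two half-line problems: the left problem $u_t=\eps u_{xx}$ lives on the spatial scale $\sqrt{\eps t}$, so the ratio between its Dirichlet and Neumann traces at $x=0^-$ is of order $(\eps t)^{-1/2}$; combined with the interface conditions, this introduces the competing prefactors $\eps^{q}$ (potential) and $\eps^{1/2}$ (flux) at $x=0^+$, whose winner is selected by the sign of $q-1/2$. The cleanest verification is on the self-similar case $\phi(x)=A$ for $x<0$ and $\phi(x)=B$ for $x>0$: writing the solution as $d_1+d_2\,\erf(x/\sqrt{4\eps t})$ on $\R^-$ and $c_1+c_2\,\erf(x/\sqrt{4t})$ on $\R^+$, the transmission conditions reduce to the $2\times 2$ linear system
\begin{equation*}
c_1=\eps^{q}d_1,\qquad c_2=\eps^{1/2}d_2,\qquad d_1-d_2=A,\qquad c_1+c_2=B,
\end{equation*}
whose solution $c_1=(A\eps^{q}+B\eps^{q-1/2})/(1+\eps^{q-1/2})$ and $c_2=(B-A\eps^{q})/(1+\eps^{q-1/2})$ yields \eqref{BC} and \eqref{decay} by inspection. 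For general $\phi$ the same scaling plays out term by term in the Laplace representation, and pointwise convergence $u\to U$ on $\R^+\times\R^+$ follows from the uniform $L^\infty$ bound of the maximum principle together with continuity of the transmission Green's function.

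The main obstacle I anticipate is converting the pointwise boundary estimates \eqref{decay} into genuine boundary conditions on the limit $U$; that is, identifying the trace of $U$ (or of $U_x$) with the $\eps\to 0$ limit of the traces of $u$ (or of $u_x$) in a sense strong enough to close the limit problem \eqref{1.1}--\eqref{BC}. Purely soft compactness arguments do not appear to distinguish between the two alternatives in \eqref{BC}; the explicit Laplace/Green's-function representation seems essential here, either through an Abelian-type argument on the transforms or through an equivalent Volterra integral equation for the boundary traces.
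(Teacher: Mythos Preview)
Your plan is correct and runs parallel to the paper's argument: both construct the solution explicitly, use the Laplace transform to pin down the unknown interface trace, and then read off the boundary behaviour from the resulting formula together with the scaling in~$\eps$. The one genuine device the paper adds, which you might find worth adopting, is the change of variables $y=\int_0^x D^{-q}(s)\,ds$ together with the potential $p=D^q u$. In the new variables the problem becomes $p_t=(D^{1-2q}p_y)_y$ with the \emph{single} parameter $\sigma=\eps^{1-2q}$, and the interface conditions collapse to continuity of $p$ and of $\sigma p_y$ (your two prefactors $\eps^q$ and $\eps^{1/2}$ are absorbed into $\sigma$). This makes the dichotomy at $q=1/2$ transparent: it is simply $\sigma\to 0$ versus $\sigma\to\infty$. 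The paper then derives a closed formula for the trace $h(t)=p(t,0)$ valid for arbitrary bounded $p_0$, which replaces your ``term by term in the Laplace representation'' step and handles the general~$\phi$ without any further approximation; your step-function computation is exactly the special case the paper works out in Section~\ref{sect.examples}. One small gap to watch: the regularity \eqref{Regularity} is \emph{not} quite automatic from Definition~\ref{def:1.1}. Continuity of $D^q u$ across $x=0$ for each fixed~$t$ does not by itself give continuity of $(D^q u)_t$; you need that the interface trace is $C^1$ in~$t$ (which your Laplace construction does deliver, but it has to be checked), and the paper treats the matching of the second spatial derivative of the flux as a separate lemma rather than a consequence of the PDE alone.
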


According to the theorem, the Neumann boundary condition emerges if $q<0.5$. Therefore, if Fick's diffusion is taken, the zero-flux condition is a natural boundary condition. On the other hand, if $q>0.5$, the Dirichlet boundary condition is the natural one. If it is the border case that $q=0.5$, there is no restriction on the boundary condition. Equations \eqref{R1} and \eqref{Regularity} point out that if $q\ne0$, the solution $u$, its time derivative, $u_t$, and the spatial derivative of the flux, $(D^{1-q}(D^qu)_x)_x$, have the same jump discontinuity at the interface $x=0$ which become continuous after multiplying by $D^q$. These regularity properties indicate that we need to multiply $D^q$ by \eqref{1.3}. Then, the pressure $p=D^qu$ satisfies
\[
p_t = D^q(D^{1-q} p_x)_x,\quad t>0,\ x\in\R.
\]
Notice that the pressure is not a conservative quantity and handling this equation is inconvenient. However, if we change the space variable with $\frac{\partial y}{\partial x}=D^{-q}(x)$, the equation becomes conservative,
\[
p_t = (D^{1-2q} p_y)_y,\quad t>0,\ y\in\R.
\]
This is the way we obtain \eqref{2.1} in the next section, and the pressure becomes a classical solution of it. We may extend the question to the multidimensional case with or without a reaction function. In fact, Hilhorst \emph{et al.} \cite{HKKK} showed that the Neumann boundary condition emerges along the boundary of a bounded domain $\Omega\subset\R^n$ if Fick's diffusion law is taken with bistable nonlinearity.

The paper consists as follows. In Section \ref{sect.proof}, Theorem \ref{thm} is proved. We find the explicit formula of the solution and then prove the claims in the theorem one by one using the explicit formula. In Section \ref{sect.examples}, two examples of explicit solutions of \eqref{1.3} are given. These explicit solutions are useful to see the dynamics of the problem. In Section \ref{sect.numerics}, we numerically test the convergence of boundary conditions as $\eps\to0$. These numerical computations show us detailed convergence order of the boundary conditions as $\eps\to0$ which are given in \eqref{decay}. In Section \ref{sect.why}, we discuss why the boundary condition in \eqref{BC} splits at $q=0.5$. We consider the relationship between the diffusion equation \eqref{1.3} and the microscopic-scale random walk model. The case $q=0.5$ corresponds to the one with a constant sojourn time. The heterogeneity in the sojourn time decides the boundary condition.

\section{Proof of Main Results}\label{sect.proof}

This section is for the proof of Theorem \ref{thm}. We first solve the problem explicitly and prove the theorem one by one which are divided into several lemmas. We reduce \eqref{1.3} to Fick's diffusion law by scaling the space variable and writing it in terms of the potential. First, introduce a new space variable
\begin{equation}\label{2.0}
y := \int_0^x D^{-q}(s) \, ds,
\end{equation}
and write the potential as
\[
p(t,y):=D^{q}(x) u(t,x), \quad  p_0(y):=D^{q}(x) \phi(x).
\]
Since $D$ is constant respectively for $x<0$ and $x>0$, we have $D(y)=D(x)$ under the relation \eqref{2.0}. Then,
\[
	p_t(t,y) = D^{q} u_t(t,x), \quad 	p_y(t,y) = D^q(D^{q} u(t,x))_x.
\]
Therefore, in terms of the new variables, the flux is written as
\[
D^{1-q}(D^{q} u)_x=D^{1-2q}(D^{q} u)_y=D^{1-2q}p_y=
\begin{cases}
\sigma p_y,& y<0,\\
\ \ p_y,&y>0,
\end{cases}
\]
where we denote $\sigma:=\eps^{1-2q}$ for simplicity. Finally, the main equation \eqref{1.3} is written as
\begin{equation}\label{2.1}
	\begin{cases}
		p_t = ( D^{1-2q} p_y)_y, \quad &t>0,\ y\in\R,\\
		p(0,y) =  p_0(y), \quad & y \in \R.
	\end{cases}
\end{equation}
The strategy to prove Theorem \ref{thm} is to solve \eqref{2.1} explicitly and prove the claims in the theorem using the explicit formula. Explicit solutions such as Gaussian, heat kernel, and Barenblatt solution have provided essential information about a general solution. The explicit solutions obtained in the paper play a similar role. 

We solve \eqref{2.1} by gluing together two solutions of boundary value problems with appropriate interface conditions. First, we divide the real line into two regions, $\{y > 0\}$ and $\{y < 0\}$. Then, in the region $\{y > 0\}$, \eqref{2.1} is written as
\begin{equation}\label{2.3}
\begin{cases}
p_t = p_{yy}, & t,y>0,\\
p(0,y) = p_0(y),\ & y>0,\\
p(t,0) = h(t),  & t>0,
\end{cases}
\end{equation}
where $h(t)$ is treated as a given boundary value for now. In the region $\{y<0\}$, \eqref{2.1} is written as
\begin{equation}\label{2.4}
\begin{cases}
p_t = \sigma p_{yy} \quad & t>0,\ y<0,\\
p(0,y) = p_0(y) \quad & y<0,\\
p(t,0) = h(t) \quad & t>0,
\end{cases}
\end{equation}
where $h(t)$ is the same boundary value. For any given boundary value, we may solve the problem and construct a solution $p$ defined on $\R$. This will satisfy all conditions in Definition \ref{def:1.1} except the continuity of the flux in \eqref{R2}. Therefore, it suffices to find an $h(t)$ that satisfies the continuity of the flux, i.e.,
\begin{equation}\label{2.5}
\sigma p_y(t,0^-) = p_y(t,0^+).
\end{equation}
In conclusion, the original problem \eqref{1.3} is written as a system \eqref{2.3}--\eqref{2.5}.

The solution of \eqref{2.3} is explicitly given by
\begin{equation}\label{pright}
	\begin{aligned}
		p(t,y) &= \int_0^\infty p_0(\xi) \Phi(t, y,\xi)\,d\xi \\
		&\quad + \int_{0^+}^t h'(\tau) \erfc\Big( \frac{y}{2\sqrt{t-\tau}} \Big)\,d\tau + h(0^+) \erfc\Big( \frac{y}{2\sqrt{t}} \Big),
	\end{aligned}\ y>0,
\end{equation}
where
\[
\Phi(t, x, \xi) := \frac{1}{2 \sqrt{\pi t}} (e^{-\frac{(x-\xi)^2}{4t}} - e^{-\frac{(x+\xi)^2}{4t}})
\]
(see \cite[Eq.(1.4.9) in p.19 and Eq.(1.4.21) in p.22]{kevorkian} for example). To use the same formula for the solution of \eqref{2.4}, we change the variables as
\[
s=\sigma t,\quad z=-y,\quad v(s,z)=p(t,y).
\]
Then, \eqref{2.4} turns into
\[
\begin{cases}
	v_s = v_{zz} \quad & s>0,\ z>0,\\
	v(0,z) = p_0(-z) \quad & z>0,\\
	v(s,0) = h(\frac{s}{\sigma}) \quad & s>0.
\end{cases}
\]
Therefore, 
\[
\begin{aligned}
	v(s,z) &= \int_0^\infty v(0,\xi) \Phi(s, z,\xi)\,d\xi \\
	&\quad + \int_{0^+}^s \frac{h'(\frac{\tau}{\sigma})}{\sigma} \erfc\Big( \frac{z}{2\sqrt{s-\tau}} \Big)\,d\tau + h(0^+) \erfc\Big( \frac{z}{2\sqrt{s}} \Big).
\end{aligned}
\]
If we return to the original variables, we obtain
\begin{equation}\label{pleft}
	\begin{aligned}
		p(t,y) &= \int_0^\infty p_0(-\xi) \Phi(\sigma t, -y,\xi)\,d\xi \\
		&\quad + \int_{0^+}^{\sigma t} \frac{h'(\frac{\tau}{\sigma})}{\sigma} \erfc\Big( \frac{-y}{2\sqrt{\sigma t-\tau}} \Big)\,d\tau + h(0^+) \erfc\Big( \frac{-y}{2\sqrt{\sigma t}} \Big),
	\end{aligned}\ y<0.
\end{equation}

The function $p(t,y)$ given by \eqref{pright} and \eqref{pleft} satisfies \eqref{2.3} and \eqref{2.4}, but not \eqref{2.5}. The first step to find $h(t)$ that makes \eqref{2.5} to be satisfied is comparing the flux at the interface from the left and the right sides. 
\begin{lemma} Let $p(t,y)$ be given by \eqref{pright} and \eqref{pleft}, and $h(t) \in C^1(\R^+)\cap C(\overline{\R^+})$. Then,
\begin{eqnarray}
\label{right derivative}
&&\frac{\partial}{\partial y} p(t,0^+) =\int_0^\infty \frac{p_0(\xi)\xi}{2\sqrt{\pi t^3}}e^{-\frac{\xi^2}{4t}} \,d\xi-\int_{0^+}^t \frac{h'(\tau)}{\sqrt{\pi(t-\tau)}}\, d\tau - \frac{h(0^+)}{\sqrt{\pi t}},\\
\label{left derivative}
&&\frac{\partial}{\partial y} p(t,0^-)=-\int_0^{\infty} \frac{p_0(-\sqrt{\sigma}\xi)\xi}{2\sqrt{\pi\sigma t^3}}e^{-\frac{\xi^2}{4t}} \,d\xi+\int_{0^+}^t \frac{h'(\tau)}{\sqrt{\pi\sigma(t-\tau)}}\, d\tau + \frac{h(0^+)}{\sqrt{\pi\sigma t}}.
\end{eqnarray}
\end{lemma}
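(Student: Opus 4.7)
The plan is to differentiate the explicit formulas \eqref{pright} and \eqref{pleft} with respect to $y$ term-by-term, and then to take the one-sided limits $y\to 0^\pm$. Two elementary derivative identities carry most of the load. A direct computation from the definition of $\Phi$ gives
\[
\partial_y\Phi(T,y,\xi)\bigl|_{y=0}=\frac{\xi}{2\sqrt{\pi T^{3}}}\,e^{-\xi^{2}/(4T)},
\]
since the two Gaussians in $\Phi$ contribute with the same sign once the $y$-derivative is taken and $y$ is set to zero. The complementary error function satisfies
\[
\partial_y\erfc\!\Bigl(\tfrac{\pm y}{2\sqrt{T-\tau}}\Bigr)=\mp\,\frac{e^{-y^{2}/(4(T-\tau))}}{\sqrt{\pi(T-\tau)}},
\]
which evaluates to $\mp 1/\sqrt{\pi(T-\tau)}$ at $y=0$.

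For \eqref{right derivative} I would apply these two identities inside \eqref{pright} with $T=t$ and evaluate at $y=0^+$; the three terms of \eqref{pright} produce the three terms of \eqref{right derivative} in order, and no change of variables is needed. For \eqref{left derivative} I would do the same starting from \eqref{pleft} with $T=\sigma t$, picking up a sign from the argument $-y$ via the chain rule. This leaves an intermediate expression in which the Gaussian in the initial-data integral has width $\sigma t$ and the boundary integral runs up to $\sigma t$. A change of variables $\xi=\sqrt{\sigma}\,\eta$ in the initial-data integral and $\tau=\sigma s$ in the boundary integral rescales these back to the form displayed in \eqref{left derivative}; collecting the $\sigma$-factors produces the single $1/\sqrt{\sigma}$ in each term.

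The only real obstacle is justifying that the $y$-derivative (equivalently, the limit $y\to 0^\pm$) may be taken inside the two integrals. For the initial-data integral, $p_0\in L^\infty$ together with the Gaussian decay in $\xi$ yields a dominating function uniformly in a neighborhood of $y=0$, so dominated convergence applies. For the boundary-data integral, the key observation is that $e^{-y^{2}/(4(t-\tau))}\le 1$ while $1/\sqrt{t-\tau}$ is integrable on $(0,t)$, so the kernel is bounded by an integrable function uniformly in $y$; combined with continuity of $h'$ on $\R^+$, dominated convergence again justifies both the differentiation and the one-sided limit. The hypothesis $h\in C^{1}(\R^+)\cap C(\overline{\R^+})$ is used precisely to keep the boundary terms $h(0^+)\erfc(\cdot)$ and $\int_{0^+}^{\cdot}h'(\tau)\,\ldots\,d\tau$ well-defined and to allow the exchange of limit and integration; everything else is direct calculus.
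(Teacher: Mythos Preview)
Your proposal is correct and follows essentially the same route as the paper: differentiate \eqref{pright} and \eqref{pleft} term by term, take the one-sided limits $y\to0^\pm$, and for the left side perform the changes of variable $\xi\mapsto\sqrt{\sigma}\,\xi$ and $\tau\mapsto\sigma\tau$ to arrive at \eqref{left derivative}. The paper carries this out by writing the full expression for $\partial_y p(t,y)$ on each half-line before letting $y\to0$, whereas you package the calculation via the two identities for $\partial_y\Phi|_{y=0}$ and $\partial_y\erfc$; this is the same computation, and your added dominated-convergence justification for exchanging the limit with the integrals is a detail the paper leaves implicit.
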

\begin{proof}
	First, calculate $\frac{\partial}{\partial y} p(t,y)$ for $y>0$ using \eqref{pright},
	\begin{eqnarray}
		&&\begin{array}{ll}\label{prightderivative}
			\ds\frac{\partial}{\partial y} p(t,y) =& \int_0^\infty \frac{p_0(\xi)}{4\sqrt{\pi t^3}}((\xi-y)e^{-\frac{(y-\xi)^2}{4t}}+(\xi+y)e^{-\frac{(y+\xi)^2}{4t}})\, d\xi\\
			&- \int_{0^+}^t \frac{1}{\sqrt{\pi(t-\tau)}}h'(\tau) e^{-\frac{y^2}{4(t-\tau)}} \, d\tau - h(0^+)\frac{1}{\sqrt{\pi t}}e^{-\frac{y^2}{4t}}.
		\end{array}
	\end{eqnarray}
	Then, \eqref{right derivative} is derived directly by taking the limit as $y\to0^+$ in \eqref{prightderivative}.
	
	Next, calculate $\ds\frac{\partial}{\partial y} p(t,y)$ for $y<0$ using \eqref{pleft},
	\begin{eqnarray}
		&&\begin{array}{ll}\label{pleftderivative}
			\ds\frac{\partial}{\partial y} p(t,y) =&\int_0^{\infty} \frac{p_0(-\xi)}{4\sqrt{\pi \sigma^3t^3}}((\xi+y)e^{-\frac{(y+\xi)^2}{4\sigma t}}+(\xi-y)e^{-\frac{(y-\xi)^2}{4\sigma t}})\,d\xi\\
			&- \int_{0^+}^{\sigma t} \frac{h'(\frac{\tau}{\sigma})}{\sigma}\frac{1}{\sqrt{\pi(\sigma t-\tau)}} e^{-\frac{y^2}{4(\sigma t-\tau)}} \, d\tau - h(0^+)\frac{1}{\sqrt{\pi\sigma t}}e^{-\frac{y^2}{4\sigma t}}.
		\end{array}
	\end{eqnarray}
	Taking the limit as $y\to0^-$ in \eqref{pleftderivative} gives
	\[
	\frac{\partial}{\partial y}p(t,0^-) = \int_0^\infty \frac{p_0(-\xi)\xi}{2\sqrt{\pi\sigma^3 t^3}}e^{-\frac{\xi^2}{4\sigma t}} \,d\xi-\int_{0^+}^{\sigma t} \frac{h'(\frac{\tau}{\sigma})}{\sigma\sqrt{\pi(\sigma t-\tau)}}\, d\tau - \frac{h(0^+)}{\sqrt{\pi \sigma t}},
	\]
	which implies \eqref{left derivative} by taking change of variables $\xi' := \frac{\xi}{\sqrt{\sigma}}$ and $\tau' := \frac{\tau}{\sigma}$.
\end{proof}

Next, we find the $h(t)$ that makes the flux continuous, i.e., \eqref{2.5} is satisfied. We also show the $h(t)$ is uniquely decided and hence the solution of \eqref{1.3} is unique.
\begin{lemma}\label{boundedsol} Let $\sigma\in\R^+$, $p_0 \in C(\Omega_b) \cap L^\infty(\R)$, and $p_0\ge0$. The function $p(t,y)$ given by \eqref{pright} and \eqref{pleft} is the bounded solution of \eqref{2.3}--\eqref{2.5} if and only if
	\begin{equation}\label{h(t)}
		h(t) = \int_0^\infty \frac{p_0(\xi)+\sqrt{\sigma}p_0(-\sqrt{\sigma }\xi)}{1+\sqrt{\sigma}}\frac{e^{-\frac{\xi^2}{4t}}}{\sqrt{\pi t}}\,d\xi.
	\end{equation}
Furthermroe, $h(t) \in C^1(\R^+)\cap C([0,\infty))$ and $p\in C_t^1(\R^+\times\R)$.
\end{lemma}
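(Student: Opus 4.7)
The plan is to convert the flux-continuity condition \eqref{2.5} into a scalar convolution equation for $h(t)$ and then solve it by the Laplace transform.

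First, I would substitute \eqref{right derivative} and \eqref{left derivative} into $\sigma p_y(t,0^-)=p_y(t,0^+)$. Pulling a common factor $\sqrt{\sigma}=\sigma/\sqrt{\sigma}$ out of every term originating from the $y<0$ side and collecting, the condition should reduce to the Abel-type identity
\[
(1+\sqrt{\sigma})\!\left[\int_{0^+}^t\!\frac{h'(\tau)}{\sqrt{\pi(t-\tau)}}\,d\tau+\frac{h(0^+)}{\sqrt{\pi t}}\right]
=\int_0^\infty\!\frac{[p_0(\xi)+\sqrt{\sigma}\,p_0(-\sqrt{\sigma}\xi)]\,\xi}{2\sqrt{\pi t^3}}e^{-\xi^2/4t}\,d\xi,
\]
in which $h$ is the only unknown.

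Next I apply the Laplace transform. The convolution rule together with $\L[1/\sqrt{\pi t}](s)=1/\sqrt{s}$ collapses the left-hand side to $(1+\sqrt{\sigma})\sqrt{s}\,\hat h(s)$; the $h(0^+)$ contributions coming from $\L[h']$ and from the explicit endpoint term cancel exactly. For the right-hand side the classical first-passage identity $\L[\xi e^{-\xi^2/4t}/(2\sqrt{\pi t^3})](s)=e^{-\xi\sqrt{s}}$, together with Fubini (justified by boundedness of $p_0$), converts the equation into
\[
\hat h(s)=\frac{1}{(1+\sqrt{\sigma})\sqrt{s}}\int_0^\infty\!\bigl[p_0(\xi)+\sqrt{\sigma}\,p_0(-\sqrt{\sigma}\xi)\bigr]\,e^{-\xi\sqrt{s}}\,d\xi.
\]
Inverting term by term via $\L^{-1}[e^{-\xi\sqrt{s}}/\sqrt{s}](t)=e^{-\xi^2/4t}/\sqrt{\pi t}$ yields exactly \eqref{h(t)}. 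Uniqueness follows from the same computation: the difference of two bounded solutions has $p_0\equiv 0$, so its trace $\tilde h$ satisfies $\sqrt{s}\,\widehat{\tilde h}(s)=0$ and hence $\tilde h\equiv 0$; uniqueness of bounded solutions of the Dirichlet heat problem on each half-line then gives $p_1=p_2$.

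For the regularity claims, $|h(t)|\le\|p_0\|_{L^\infty}$ follows from $\int_0^\infty e^{-\xi^2/4t}/\sqrt{\pi t}\,d\xi=1$; $h\in C^1(\R^+)$ by differentiating under the integral, using the exponential decay of the Gaussian factor; continuity up to $t=0$ from the fact that $e^{-\xi^2/4t}/\sqrt{\pi t}$ is an approximate identity at $\xi=0^+$ as $t\to0^+$; and $p\in C_t^1(\R^+\times\R)$ because on each open half-line $p$ is a classical heat solution while the matched trace $p_t(t,0^\pm)=h'(t)$ provides continuity across the interface. The main obstacle I expect is the first step: carefully tracking the several $\sqrt{\sigma}$ rescalings inherited from the change of variables used to derive \eqref{pleft}, and recognizing that the explicit $h(0^+)/\sqrt{\pi t}$ appearing in \eqref{right derivative}--\eqref{left derivative} is precisely the piece needed to cancel the boundary contribution in $\L[h']$. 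Once the integral equation is in the form above, everything else is standard heat-kernel bookkeeping.
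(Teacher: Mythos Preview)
Your proposal is correct and follows essentially the same route as the paper: substitute \eqref{right derivative}--\eqref{left derivative} into the flux condition \eqref{2.5} to obtain the Abel-type identity (the paper's \eqref{flux-continuity'''} is your displayed equation divided by $1+\sqrt{\sigma}$), solve it via the Laplace transform, and then verify the regularity of $h$ by differentiation under the integral sign. Your discussion is in fact slightly more explicit than the paper's on two points---the cancellation of the $h(0^+)$ terms in the transform step and the separate uniqueness argument via the difference of two solutions---but the underlying argument is identical.
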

\begin{proof}
	Suppose that \eqref{2.3}--\eqref{2.5} has a bounded solution. Putting the results \eqref{right derivative}, \eqref{left derivative} into \eqref{2.5} gives
	\begin{equation}\label{flux-continuity'''}
		\int_{0^+}^t \frac{h'(\tau)}{\sqrt{\pi (t-\tau)}}  \, d\tau + \frac{h(0^+)}{\sqrt{\pi t}}= \int_0^\infty \frac{p_0(\xi) + \sqrt{\sigma} p_0(-\sqrt{\sigma}\xi)}{(1+\sqrt{\sigma})}\frac{\xi}{2\sqrt{\pi t^3}}e^{-\frac{\xi^2}{4t}}\, d\xi.
	\end{equation}
	Take the Laplace transform of \eqref{flux-continuity'''} with respect to the variable $t$. Then we obtain a formula
	\begin{equation}\label{laplace}
		\L\{h\}(s) = \int_0^\infty \frac{p_0(\xi)+\sqrt{\sigma}p_0(-\sqrt{\sigma }\xi)}{1+\sqrt{\sigma}}\frac{e^{-\xi\sqrt{s}}}{\sqrt{s}}\,d\xi.
	\end{equation}
	Finally, by taking the inverse of Laplace transform of \eqref{laplace}, we have
	\[
	h(t) = \int_0^\infty \frac{p_0(\xi)+\sqrt{\sigma}p_0(-\sqrt{\sigma }\xi)}{1+\sqrt{\sigma}}\frac{e^{-\frac{\xi^2}{4t}}}{\sqrt{\pi t}}\,d\xi,
	\]
	which yields $h(t)\in C([0,\infty))$.
	
	To prove the lemma in the other direction, suppose that $h(t)$ is given by the formula \eqref{h(t)}. Note that
	\[|h(t)|\le \frac{\|p_0\|_{L^\infty(\R^+)}+\sqrt{\sigma}\|p_0\|_{L^\infty(\R^-)}}{1+\sqrt{\sigma}},\quad \mbox{for all $t>0$}.\]
Denote
	\[
	g(t,\xi) :=\frac{p_0(\xi)+\sqrt{\sigma}p_0(-\sqrt{\sigma }\xi)}{1+\sqrt{\sigma}}\frac{e^{-\frac{\xi^2}{4t}}}{\sqrt{\pi t}}.
	\]
	Differentiating the integrand $g$ with respect to $t>0$, we have
	\[
	\frac{\partial}{\partial t} g(t,\xi) = \frac{p_0(\xi)+\sqrt{\sigma}p_0(-\sqrt{\sigma }\xi)}{1+\sqrt{\sigma}}(\frac{\xi^2}{4t^2}-\frac{1}{2t})\frac{e^{-\frac{\xi^2}{4t}}}{\sqrt{\pi t}}.
	\]
	Since $\frac{p_0(\xi)+\sqrt{\sigma}p_0(-\sqrt{\sigma }\xi)}{1+\sqrt{\sigma}}\in L^\infty_\xi(\R^+)$ and $(\frac{\xi^2}{4t^2}-\frac{1}{2t})\frac{e^{-\frac{\xi^2}{4t}}}{\sqrt{\pi t}}\in C_t(\R^+;L^1_\xi(\R^+))$, we obtain that $\frac{\partial}{\partial t} g(t,\xi)\in C_t(\R^+;L^1_\xi(\R^+))$. Thus, we can interchange derivative and integral operators such as
	\[
	h'(t) = \frac{d}{dt}\int_0^\infty g(t,\xi)\, d\xi = \int_0^\infty \frac{\partial}{\partial t} g(t,\xi)\, d\xi,
	\]
	which yields $h(t)\in C^1([0,\infty))$. In other words, the differentiability of $p$ with respect to $t$ variable is extended to include the interface $x=0$, i.e., $p\in C_t^1(\R^+\times\R)$.
	
	Now, we can construct a solution $p$ of Problem \eqref{2.3}--\eqref{2.5} from \eqref{pright}, \eqref{pleft} and \eqref{h(t)}. We note that $p_0\in L^\infty(\R)$ and $h\in L^\infty(\R^+)$. Since $\|p\|_{L^\infty(\R^+\times\R)}\le \|p_0\|_{L^\infty(\R)} + \|h\|_{L^\infty(\R^+)}$, $p$ is bounded.
\end{proof}

We have obtained the uniqueness and the existence of the solution of \eqref{1.3}. Next, we show the rest of the theorem. The first part of \eqref{Regularity} is equivalent to the inclusion $p\in C_t^1(\R^+\times\R)$. It may be written as
\[
p_t=D^q u_t\in C(\R^+\times\R).
\]
In the next lemma, we show the second part of \eqref{Regularity}.

\begin{lemma}[Extra regularity of the flux]\label{lemma2.3} Let $p(t,y)$ be the bounded solution in Lemma \ref{boundedsol}. Then, the flux $(D^{1-2q}p_y)$ is in $C^1_y(\R^+\times\R)$, i.e.,
	\[\sigma \frac{\partial^2}{\partial y^2}p(t,0^-) = \frac{\partial^2}{\partial y^2}p(t,0^+) = h'(t).\]
\end{lemma}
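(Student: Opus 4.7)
The plan is to leverage the PDE itself together with the time regularity already proved in Lemma \ref{boundedsol}, rather than computing $p_{yy}$ directly from the explicit formulas \eqref{pright} and \eqref{pleft}. The key observation is that on each half-line the diffusivity is constant, so the equation reads $p_t = p_{yy}$ for $y>0$ and $p_t = \sigma p_{yy}$ for $y<0$. This converts the question about the spatial second derivative of $p$ into a question about the time derivative, where we already have good control.

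I would carry out the argument in three short steps. First, since $p\in C_t^1(\R^+\times\R)$ by Lemma \ref{boundedsol}, the function $p_t$ is continuous up to the interface from both sides; combined with the boundary identity $p(t,0^\pm)=h(t)$ and $h\in C^1(\R^+)$, this gives $p_t(t,0^+) = p_t(t,0^-) = h'(t)$. Second, applying the PDE on $\{y>0\}$ and passing to the limit $y\to 0^+$ shows that $p_{yy}(t,y)$ has a one-sided limit at $y=0^+$ equal to $p_t(t,0^+) = h'(t)$, and similarly on $\{y<0\}$ the identity $\sigma p_{yy}=p_t$ extends to $y\to 0^-$ yielding $\sigma p_{yy}(t,0^-) = h'(t)$. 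Third, I conclude
\[
\sigma\,\frac{\partial^2}{\partial y^2}p(t,0^-) \;=\; \frac{\partial^2}{\partial y^2}p(t,0^+) \;=\; h'(t),
\]
which is exactly the claim of the lemma, and which means the $y$-derivative of the flux $D^{1-2q}p_y$ agrees from both sides at $y=0$, giving membership in $C^1_y(\R^+\times\R)$.

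The only potentially subtle point is the justification that $p_{yy}(t,y)$ really admits a one-sided limit as $y\to 0^\pm$ (a priori $p_{yy}$ is only defined on $\Omega_b$). This is handled automatically by the PDE: on each open half-line, $p_{yy}$ equals either $p_t$ or $\sigma^{-1}p_t$, and Lemma \ref{boundedsol} already provides continuity of $p_t$ up to and across the interface. Hence no further regularity computation is required — the extra regularity of the flux is a direct consequence of the continuity of $p_t$ combined with the structure of the equation on each half-line.
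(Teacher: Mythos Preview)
Your argument is correct and takes a genuinely different route from the paper. The paper proves the lemma by direct computation: it differentiates the explicit integral representations \eqref{pright} and \eqref{pleft} twice in $y$, writes out $p_{yy}(t,y)$ on each half-line, and then evaluates the one-sided limits $y\to0^\pm$ via the substitution $\eta=\pm y/\sqrt{t-\tau}$ in the boundary term to obtain $h'(t)$ (with the factor $\sigma^{-1}$ on the left). Your approach instead exploits the structure of the equation on each open half-line, $p_t=p_{yy}$ for $y>0$ and $p_t=\sigma p_{yy}$ for $y<0$, together with the $C^1_t$ regularity already recorded in Lemma~\ref{boundedsol}, to transfer the question about $p_{yy}(t,0^\pm)$ to one about $p_t(t,0^\pm)=h'(t)$. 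This is shorter and more conceptual; the paper's explicit computation is longer but self-contained, in that it does not lean on the inclusion $p\in C^1_t(\R^+\times\R)$, which in the proof of Lemma~\ref{boundedsol} is asserted somewhat briskly from $h\in C^1$. In effect the paper's calculation of $p_{yy}(t,0^\pm)$ can be read as an independent verification of that $C^1_t$ claim, whereas your argument takes it as input.
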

\begin{proof}
	Remember that $h(t)\in C^1(\R^+)\cap C(\overline{\R^+})$ is already shown in the proof of Lemma \ref{boundedsol}. First, calculate $\frac{\partial^2}{\partial y^2}p(t,y)$ for $y>0$:
	\begin{eqnarray}
		&&\begin{array}{ll}\label{rightsecond}
			&\frac{\partial^2}{\partial y^2}p(t,y) \\
			&=\int_0^\infty \frac{p_0(\xi)}{8\sqrt{\pi t^5}}(((\xi-y)^2-1)e^{-\frac{(y-\xi)^2}{4t}}-((\xi+y)^2-1)e^{-\frac{(y+\xi)^2}{4t}}) \, d\xi\\
			&\ \ + \int_{0^+}^t \frac{y}{2\sqrt{\pi(t-\tau)^3}}h'(\tau) e^{-\frac{y^2}{4(t-\tau)}} \, d\tau + h(0^+)\frac{y}{2\sqrt{\pi t^3}}e^{-\frac{y^2}{4t}}.
		\end{array}
	\end{eqnarray}
	Taking the limit of \eqref{rightsecond} as $y\to 0^+$ gives
	\[
	\frac{\partial^2}{\partial y^2}p(t,0^+)=\lim_{y\to0^+} \int_{0^+}^t \frac{y}{2\sqrt{\pi (t-\tau)^3}}h'(\tau) e^{-\frac{y^2}{4(t-\tau)}} \, d\tau.
	\]
After taking $\eta :=\frac{y}{\sqrt{t-\tau}}$, the second-order derivative is written as
	\[
	\frac{\partial^2}{\partial y^2}p(t,0^+)=\lim_{y\to0^+} \int_{\frac{y}{\sqrt{t}}}^\infty \frac{1}{\sqrt{\pi}}h'\Big(t-\frac{y^2}{\eta^2}\Big) e^{-\frac{\eta^2}{4}}\, d\eta\\
	=h'(t).
	\]
	
	Similarly, calculate $\frac{\partial^2}{\partial y^2}p(t,y)$ for $y<0$:
	\begin{eqnarray}
		&&\begin{array}{ll}\label{leftsecond}
			&\frac{\partial^2}{\partial y^2}p(t,y) \\
			&=\int_0^\infty \frac{p_0(-\xi)}{8\sqrt{\pi \sigma^5 t^5}}(((\xi-y)^2-1)e^{-\frac{(y-\xi)^2}{4\sigma t}}-((\xi+y)^2-1)e^{-\frac{(y+\xi)^2}{4\sigma t}}) \, d\xi\\
			&\ \ + \int_{0^+}^t \frac{y}{2\sqrt{\pi\sigma^3(t-\tau)^3}}h'(\tau) e^{-\frac{y^2}{4\sigma(t-\tau)}} \, d\tau + h(0^+)\frac{y}{2\sqrt{\pi \sigma^3t^3}}e^{-\frac{y^2}{4\sigma t}}.
		\end{array}
	\end{eqnarray}
	Taking the limit of \eqref{leftsecond} as $y\to 0^-$ gives
	\[
	\frac{\partial^2}{\partial y^2}p(t,0^-)=\lim_{y\to0^+} \int_{0^+}^t \frac{y}{2\sqrt{\pi \sigma^3(t-\tau)^3}}h'(\tau) e^{-\frac{y^2}{4\sigma(t-\tau)}} \, d\tau.
	\]
After taking $\eta :=-\frac{y}{\sqrt{t-\tau}}$, the second-order derivative is written as
	\[
	\frac{\partial^2}{\partial y^2}p(t,0^-)= \lim_{y\to0^-} \int^{\frac{y}{\sqrt{t}}}_{-\infty} \frac{1}{\sqrt{\pi \sigma^3}}h'\Big(t-\frac{y^2}{\eta^2}\Big) e^{-\frac{\eta^2}{4\sigma}}\, d\eta\\
	=h'(t)/\sigma.
	\]
The comparison of the two second-order derivatives completes the proof.
\end{proof}

Lemmas \ref{boundedsol} and \ref{lemma2.3} imply that the obtained solution $p(t,y)$ satisfies \eqref{2.1} in the classical sense. That means the differentiations in the equation \eqref{2.1} is valid even at the interface $x=0$. The second part of \eqref{Regularity} is equivalent to the inclusion $(D^{1-2q}p_y)\in C^1_y(\R^+\times\R)$. It may be written as
\[
(D^{1-2q}p_y)_y=D^q(D^{1-2q}D^qp_x)_x=D^q(D^{1-q}(D^qu)_x)_x\in C(\R^+\times\R).
\]
Next, we show the boundary behavior of the solution.

\begin{lemma}[Boundary condition when $\eps\to0$] Let $\sigma\in\R^+$, $p_0 \in C(\Omega_b) \cap L^\infty(\R)$, $p_0\ge0$, and $h(t;\epsilon)$ be given by \eqref{h(t)}. Then, $p(t,y; \epsilon)$ converges to a limit $U(t,y)$ for all $y>0$ pointwise as $\eps\to0$, and
	\[
		\begin{cases}
			\ds\lim_{y\to0+}\frac{\partial U}{\partial y}(t,y)=0 & \text{ if }\ q<0.5,\\
			\ds\lim_{y\to0+}\ \ U(t,y) =0 &\text{ if }\ q>0.5.
		\end{cases}
	\]	
Furthermore, as $\epsilon \to 0$,
\begin{equation*}
\begin{cases}
\ds \left| \frac{\partial p}{\partial y}(t,0^+;\epsilon) \right| = O(\epsilon^{0.5 - q}) & \text{ if }\ q<0.5,\\
\ds \ \ |p(t, 0^+; \epsilon)| = O(\epsilon^{q - 0.5}) &\text{ if }\ q>0.5.
\end{cases}
\end{equation*}
\end{lemma}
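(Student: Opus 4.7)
The plan is to make the $\epsilon$-dependence explicit in \eqref{h(t)} and then push $\epsilon\to 0$ in the integrals. First I would unfold the $\epsilon$-dependence of $p_0$: the change of variables \eqref{2.0} gives $p_0(y)=\phi(y)$ for $y>0$ and $p_0(y)=\epsilon^q\phi(\epsilon^q y)$ for $y<0$, so the key identity $\epsilon^q\sqrt{\sigma}=\sqrt{\epsilon}$ converts $\sqrt{\sigma}\,p_0(-\sqrt{\sigma}\,\xi)$ into $\sqrt{\epsilon}\,\phi(-\sqrt{\epsilon}\,\xi)$ and recasts \eqref{h(t)} as
\begin{equation*}
h(t;\epsilon)=\frac{1}{1+\sqrt{\sigma}}\int_0^\infty\bigl(\phi(\xi)+\sqrt{\epsilon}\,\phi(-\sqrt{\epsilon}\,\xi)\bigr)\frac{e^{-\xi^2/(4t)}}{\sqrt{\pi t}}\,d\xi,\qquad \sqrt{\sigma}=\epsilon^{0.5-q}.
\end{equation*}
Since $p(t,0^+;\epsilon)=h(t;\epsilon)$, the bound $|h(t;\epsilon)|\le(1+\sqrt{\epsilon})\|\phi\|_\infty/(1+\sqrt{\sigma})$ immediately delivers the second line of \eqref{decay}; in particular $h(t;\epsilon)\to0$ for $q>0.5$, which will become the Dirichlet boundary value of the limit $U$. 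For $q<0.5$, dominated convergence in the same formula gives $h(t;\epsilon)\to h_\ast(t):=\int_0^\infty\phi(\xi)e^{-\xi^2/(4t)}/\sqrt{\pi t}\,d\xi$, the trace of the Neumann heat-kernel solution on $\R^+$.

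For the normal-derivative half of \eqref{BC} and the first line of \eqref{decay}, I would substitute \eqref{flux-continuity'''} into \eqref{right derivative} to cancel the $h'$-convolution and the $h(0^+)$ term simultaneously, obtaining
\begin{equation*}
p_y(t,0^+;\epsilon)=\frac{\sqrt{\sigma}}{1+\sqrt{\sigma}}\int_0^\infty\frac{\xi\,e^{-\xi^2/(4t)}}{2\sqrt{\pi t^3}}\bigl(\phi(\xi)-\epsilon^q\phi(-\sqrt{\epsilon}\,\xi)\bigr)\,d\xi.
\end{equation*}
The weight $\xi e^{-\xi^2/(4t)}$ is in $L^1_\xi(\R^+)$ and the bracket is $L^\infty$-bounded, so for $q<0.5$ the prefactor $\sqrt{\sigma}/(1+\sqrt{\sigma})\le\epsilon^{0.5-q}$ yields $|p_y(t,0^+;\epsilon)|=O(\epsilon^{0.5-q})$; for $q>0.5$ the prefactor tends to $1$ and the integral has a generically non-zero limit, consistent with the Dirichlet regime just found. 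Passing $\epsilon\to0$ in \eqref{flux-continuity'''} also shows that $h_\ast$ satisfies the Abel equation whose vanishing is exactly $U_y(t,0^+)=0$, so the Neumann boundary condition is inherited by the limit $U$.

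For pointwise convergence of $p(t,y;\epsilon)$ at a fixed interior $y>0$, I would integrate by parts in the $h'$-integral in \eqref{pright}: the boundary contribution at $\tau=t^-$ vanishes because $\erfc(\infty)=0$, and the one at $\tau=0^+$ cancels the separate $h(0^+)\erfc(y/2\sqrt{t})$ term already present in \eqref{pright}, leaving
\begin{equation*}
p(t,y;\epsilon)=\int_0^\infty\phi(\xi)\Phi(t,y,\xi)\,d\xi+\int_0^t h(\tau;\epsilon)\,\frac{y\,e^{-y^2/(4(t-\tau))}}{2\sqrt{\pi}(t-\tau)^{3/2}}\,d\tau.
\end{equation*}
The first term is $\epsilon$-independent; in the second, the kernel is integrable on $(0,t)$ (its total mass is $\erfc(y/2\sqrt{t})$), while Lemma \ref{boundedsol} gives the uniform bound $\|h(\cdot;\epsilon)\|_\infty\le\|\phi\|_\infty$ and the first paragraph gives its pointwise limit, so dominated convergence delivers $U(t,y)$.

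The one genuinely non-routine step is handling the weakly singular $h'$-integral under $\epsilon\to0$: direct dominated convergence fails there because $h'(\cdot;\epsilon)$ is not uniformly controlled near $\tau=0^+$ and the $1/\sqrt{t-\tau}$ kernel alone provides no compensation. The integration-by-parts above is the device that shifts the $\epsilon$-dependence onto $h$ itself, where Lemma \ref{boundedsol} already gives the $\|\phi\|_\infty$ bound; analogously, substituting \eqref{flux-continuity'''} into \eqref{right derivative} is what makes the decay analysis of $p_y$ transparent without having to manipulate the singular convolution at all.
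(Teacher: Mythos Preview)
Your argument is correct and tracks the paper's proof closely: both obtain the $O(\epsilon^{0.5-q})$ decay of $p_y(t,0^+)$ by substituting \eqref{flux-continuity'''} into \eqref{right derivative} to eliminate the Abel convolution, and both read off the $O(\epsilon^{q-0.5})$ bound on $p(t,0^+)=h(t)$ directly from \eqref{h(t)} after exposing $\|p_0\|_{L^\infty(\R^-)}=\epsilon^q\|\phi\|_{L^\infty(\R^-)}$. Your explicit rewriting $\sqrt{\sigma}\,p_0(-\sqrt{\sigma}\xi)=\sqrt{\epsilon}\,\phi(-\sqrt{\epsilon}\xi)$ is the same observation the paper uses, just carried one step further.

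The one place your route genuinely diverges is the pointwise convergence of $p(t,y;\epsilon)$ at fixed $y>0$. The paper passes to the limit directly in \eqref{pright}, invoking the pointwise limit of $h'(\cdot;\epsilon)$ for $q<0.5$ and the claim that $h'\to 0$ ``uniformly'' for $q>0.5$. You instead integrate by parts first, trading the $h'$-integral for the Poisson-kernel integral against $h$, so that the uniform $L^\infty$ bound on $h$ from Lemma~\ref{boundedsol} feeds straight into dominated convergence. This is cleaner: the paper's uniform-convergence assertion for $h'$ is not literally true on $(0,t)$ (one only has $|h'(\tau;\epsilon)|\le C(\epsilon)/\tau$, which blows up at $\tau=0$), whereas your formulation sidesteps that issue entirely. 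One small caveat you share with the paper: for $q<0$ the bracket $\phi(\xi)-\epsilon^q\phi(-\sqrt{\epsilon}\xi)$ is not uniformly bounded in $\epsilon$, and splitting the two contributions yields only $O(\epsilon^{0.5-q})+O(\epsilon^{0.5})=O(\epsilon^{0.5})$; this does not affect the boundary-condition conclusion but slightly weakens the stated rate in that regime.
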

\begin{proof}
(i) Consider the case $q<0.5$. Note that $\sigma=\eps^{1-2q}\to 0$ as $\eps\to 0$.
First, we show the existence of a limit. By \eqref{h(t)}, for a fixed $t > 0$, we have
\[
\begin{aligned}
\lim_{\epsilon \to 0} h(t; \epsilon) &= \int_0^\infty p_0(\xi) \frac{e^{-\frac{\xi^2}{4t}}}{\sqrt{\pi t}}\,d\xi, \\
\lim_{\epsilon \to 0} h'(t; \epsilon) &= \int_0^\infty p_0(\xi) \Big(\frac{\xi^2}{4t^2}-\frac{1}{2t}\Big)\frac{e^{-\frac{\xi^2}{4t}}}{\sqrt{\pi t}}.
\end{aligned}
\]
Hence $h(t; 0^+) \in C^1(\R^+)\cap C(\overline{\R^+})$. Therefore, taking the limit as $\epsilon \to 0$ in \eqref{pright}, we can see that $p(t, y; \epsilon)$ converges as $\epsilon \to 0$.

Next, we show the boundary behavior. By \eqref{right derivative} and \eqref{flux-continuity'''},
\[
\begin{aligned}
\frac{\partial p}{\partial y} (t,0^+; \epsilon) &= \int_0^\infty \frac{p_0(\xi)\xi}{2\sqrt{\pi t^3}}e^{-\frac{\xi^2}{4t}} \,d\xi-\int_{0^+}^t \frac{h'(\tau)}{\sqrt{\pi(t-\tau)}}\, d\tau -\frac{ h(0^+)}{\sqrt{\pi t}} \\
&= \int_0^\infty \Big(p_0(\xi) - \frac{p_0(\xi) + \sqrt{\sigma} p_0(-\sqrt{\sigma}\xi)}{1+\sqrt{\sigma}}\Big) \frac{\xi}{2\sqrt{\pi t^3}}e^{-\frac{\xi^2}{4t}} \,d\xi \\
&= \int_0^\infty \frac{\sqrt{\sigma}(p_0(\xi) - p_0(-\sqrt{\sigma}\xi))}{1+\sqrt{\sigma}} \frac{\xi}{2\sqrt{\pi t^3}}e^{-\frac{\xi^2}{4t}} \,d\xi.
\end{aligned}
\]
Hence for a fixed $t > 0$,
\[
\left| \frac{\partial p}{\partial y} (t,0^+; \epsilon) \right| \le C \frac{\sqrt{\sigma}}{1 + \sqrt{\sigma}} = O(\sqrt{\sigma}) = O(\epsilon^{0.5 - q}).
\]

(ii) Consider the case $q>0.5$.
Note that
\[
\sigma=\eps^{1-2q}\to \infty\quad\mbox{and}\quad\|p_0\|_{L^\infty(\R^-)}=\epsilon^q \|\phi\|_{L^\infty(\R^-)} \to0
\] 
as $\eps\to 0$. By \eqref{h(t)}, we have
\begin{align*}
|h(t; \epsilon)| &= \left|\int_0^\infty \frac{p_0(\xi)+\sqrt{\sigma}p_0(-\sqrt{\sigma }\xi)}{1+\sqrt{\sigma}}\frac{e^{-\frac{\xi^2}{4t}}}{\sqrt{\pi t}}\,d\xi\right|\\
&\le \frac{\|p_0\|_{L^\infty(\R^+)}+\sqrt{\sigma}\|p_0\|_{L^\infty(\R^-)}}{1+\sqrt{\sigma}}\\
&= \frac{\|p_0\|_{L^\infty(\R^+)} \epsilon^{q-0.5} + \epsilon^q \|\phi\|_{L^\infty(\R^-)}}{\epsilon^{q-0.5}+1}\\
&= O(\epsilon^{q-0.5}),
\end{align*}
as $\epsilon \to 0$.
Similarly, we can check that $h'(t; \epsilon) \to 0$ uniformly as $\epsilon \to 0$. Therefore by \eqref{pright},
\[
\lim_{\epsilon \to 0} p(t,y; \epsilon) = \int_0^\infty p_0(\xi) \Phi(t, y,\xi)\,d\xi
\]
for all $y > 0$. Hence the limit exists. The boundary behavior is already proved because $|p(t, 0^+; \epsilon)| = |h(t; \epsilon)| = O(\epsilon^{q-0.5})$.
\end{proof}

\section{Explicit solutions}\label{sect.examples}

For special initial values, the value $h(t)$ at the interface is given in a simple form, and hence the solution $p(t,y)$ and the limit as $\eps\to0$ are given in simpler forms. In this section, we present two cases of explicit solutions. These functions provide useful information and intuition about heterogeneous diffusion equation \eqref{1.3}.

\subsection{Fundamental solution}

We consider the explicit solution when the initial value is the delta distribution placed at $x_0>0$, i.e.,
\[
\phi(x)=\delta(x-x_0).
\]
Although $\phi\notin L^\infty$, we can construct the fundamental solution. First, we calculate the boundary value $h(t)$ using \eqref{h(t)}. We write the initial condition in terms of the potential  variable $p$, which is still
\[
p_0(y) = \delta(y-x_0).
\]
Putting this into \eqref{h(t)} gives
\[h(t)=  \frac{e^{-\frac{x_0^2}{4t}}}{(1+\sqrt{\sigma})\sqrt{\pi t}}.\]
Then, \eqref{pright} and \eqref{pleft} gives the explicit solution
\[
p(t,y) = \begin{cases}\Phi(t,y,x_0) + \frac{y}{2\sqrt{\pi}}\int_0^t \frac{1}{1+\sqrt{\sigma}}\frac{e^{-\frac{x_0^2}{4(t-\tau)}}}{\sqrt{\pi(t-\tau)}}\frac{e^{-\frac{y^2}{4\tau}}}{\sqrt{\tau^3}} \,d\tau,&y>0,\\
	- \frac{y}{2\sqrt{\pi}}\int_0^t \frac{1}{\sqrt{\sigma}(1+\sqrt{\sigma})}\frac{e^{-\frac{x_0^2}{4(t-\tau)}}}{\sqrt{\pi(t-\tau)}}\frac{e^{-\frac{y^2}{4\sigma\tau}}}{\sqrt{\tau^3}} \,d\tau,&y<0.
	
	\end{cases}
\]
After a simplification, we obtain the fundamental solution of \eqref{1.3},
\begin{equation}\label{fundamental}
p(t,y)=\begin{cases}\frac{1}{2 \sqrt{\pi t}} e^{-\frac{(y-x_0)^2}{4t}} + \frac{1-\sqrt{\sigma}}{2(1+\sqrt{\sigma})}\frac{1}{\sqrt{\pi t}}e^{-\frac{(y+x_0)^2}{4t}},&y>0,\\
	\frac{1}{(1+\sqrt{\sigma})\sqrt{\pi t}}e^{-\frac{(x_0-\frac{y}{\sqrt{\sigma}})^2}{4t}},&y<0.
	\end{cases}
\end{equation}

The limit of the solution as $\eps\to0$ is divided into three cases. If $q>0.5$, $\sigma\to\infty$ as $\eps\to0$. Then, $p(t,y)$ in \eqref{fundamental} converges to
\[
p(t,y)=\begin{cases}\frac{1}{2 \sqrt{\pi t}} e^{-\frac{(y-x_0)^2}{4t}} -\frac{1}{2\sqrt{\pi t}}e^{-\frac{(y+x_0)^2}{4t}},&y>0,\\
	0,&y<0.
	\end{cases}
\]
One can easily check that $p(t,0^+)=0$ and $p_y(t,0^+)\ne0$ in the case. If $q<0.5$, then $\sigma\to0$ as $\eps\to0$, and $p(t,y)$ in \eqref{fundamental} converges to
\[
p(t,y)=\begin{cases}\frac{1}{2 \sqrt{\pi t}} e^{-\frac{(y-x_0)^2}{4t}} + \frac{1}{2\sqrt{\pi t}}e^{-\frac{(y+x_0)^2}{4t}},&y>0,\\
	0,&y<0.
	\end{cases}
\]
One can easily check that $p(t,0^+)\ne0$ and $p_y(t,0^+)=0$ in the case. 

The third case $q=0.5$ is special. In this case, $\sigma=1$ for all $\eps>0$. Hence, the solution in \eqref{fundamental} is independent of $\eps$. In fact, the variable $y$ fully absorbed the effect of the heterogeneity. Then, $p(t,y)$ in \eqref{fundamental} is written as
\[
p(t,y)=\frac{1}{2\sqrt{\pi t}}e^{-\frac{(y-x_0)^2}{4t}},\quad y\in\R,
\]
which is the heat kernel on the whole real line $\R$. One can easily check that $p(t,0^+)\ne0$ and $p_y(t,0^+)\ne0$ in the case. Both Neumann and Dirichlet boundary conditions fail when $q=0.5$.

\subsection{Step function initial value}

We consider the explicit solution when the initial value is a step function,
\[\phi(x) = \begin{cases}
	a,&x>0,\\
	b,&x<0.
\end{cases}
\]
Then, the corresponding initial value for the potential $p$ is
\[
p_0(x) = \begin{cases}
	a,&x>0,\\
	\eps^{q}b,&x<0.
\end{cases}
\]
Putting this initial value into \eqref{h(t)} gives
\[
h(t) = \frac{a+\eps^{q}b}{1+\sqrt{\sigma}}.
\]
Then, \eqref{pright} gives the explicit solution
\begin{equation*}
		p(t,y) = \begin{cases}
			a(1-\erfc\big(\frac{y}{2\sqrt{t}}\big))+ \frac{a+\eps^{q}b}{1+\sqrt{\sigma}}\erfc\big(\frac{y}{2\sqrt{t}}\big),& y>0,\\
			\eps^q b(1-\erfc\big(\frac{-y}{2\sqrt{\sigma t}}\big))+ \frac{a+\eps^{q}b}{1+\sqrt{\sigma}}\erfc\big(\frac{-y}{2\sqrt{\sigma t}}\big),& y<0.
		\end{cases}
\end{equation*}
After a simplification, we obtain the solution of \eqref{1.3},
\begin{equation}\label{3.2}
	p(t,y) = \begin{cases}
		a+\frac{\eps^{q}b-\sqrt{\sigma}a}{1+\sqrt{\sigma}}\frac{2}{\sqrt{\pi}}\int_{\frac{y}{2\sqrt{t}}}^\infty e^{-\eta^2}\, d\eta,& y>0,\\
		\eps^q b+\frac{a-\sqrt{\sigma}\eps^{q}b}{1+\sqrt{\sigma}}\frac{2}{\sqrt{\pi}}\int_{\frac{-y}{2\sqrt{\sigma t}}}^\infty e^{-\eta^2}\, d\eta,& y<0.
	\end{cases}
\end{equation}

The limit of the solution as $\eps\to0$ is divided into three cases. If $q>0.5$, $\sigma\to\infty$ as $\eps\to0$. Then, $p(t,y)$ in \eqref{3.2} converges to
\[
p(t,y) = \begin{cases}
		a-a\frac{2}{\sqrt{\pi}}\int_{\frac{y}{2\sqrt{t}}}^\infty e^{-\eta^2}\, d\eta,& y>0,\\
		0& y<0.
	\end{cases}
\]
If $q<0.5$, $\sigma\to0$ as $\eps\to0$, and $p(t,y)$ in \eqref{3.2} converges to
\[
p(t,y) = \begin{cases}
		a,\quad& y>0,\\
		0,& y<0.
	\end{cases}
\]
If $q=0.5$, $\sigma=1$ for all $\eps>0$, and $p(t,y)$ in \eqref{3.2} converges to
\[
p(t,y) = \begin{cases}
		a-\frac{a}{\sqrt{\pi}}\int_{\frac{y}{2\sqrt{t}}}^\infty e^{-\eta^2}\, d\eta,& y>0,\\
		\frac{a}{\sqrt{\pi}}\int_{\frac{-y}{2\sqrt{t}}}^\infty e^{-\eta^2}\, d\eta,& y<0.
	\end{cases}
\]

\section{Numerical test}\label{sect.numerics}

In this section, we test the boundary condition \eqref{BC} and the decay rate \eqref{decay} numerically. Solving \eqref{1.3} directly is tricky due to the discontinuity of the solution $u$ at the  interface $x=0$. One way to handle the discontinuity is to write it in terms of pressure,
\[
p(t,x):=D^qu(t,x).
\]
Then, \eqref{1.3} is written as
\begin{equation}\label{4.1}
\begin{cases}
D^{-q}p_t = (D^{1-q} p_x)_x,& t>0,\ x\in\R,\\
p(0,x) = D^q\phi(x), & x\in\R.
\end{cases}
\end{equation}
We may solve \eqref{4.1} for $p(t,x)$ and then return to $u(t,x)$. Note that both equations, \eqref{2.1} and \eqref{4.1}, are for the same diffusion pressure $p=D^qu$, where the difference is in the space variables $x$ and $y$.

Four snapshots of solutions of the two cases, $u(t,x)$ and $p(t,y)$, are given in Figure \ref{fig1}, when
\[
\phi(x)=1,\quad \Delta x=0.002,\quad \text{and }\ \ q=0.9.
\]
We used a Matlab function `pdepe' for the computation. The snapshots are taken at the moment $t=0.01$ for four cases with $\eps=2^{-k}$ with $k=1,2,3,$ and $4$. According to the variable changes, $u=p$ in the region $x>0$, which can be observed in the figures. 

\begin{figure}[h!]
\centering
\includegraphics[width=0.36\textwidth]{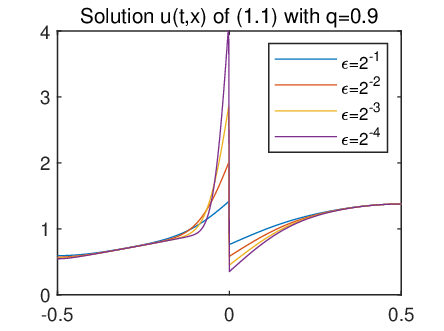}
\includegraphics[width=0.36\textwidth]{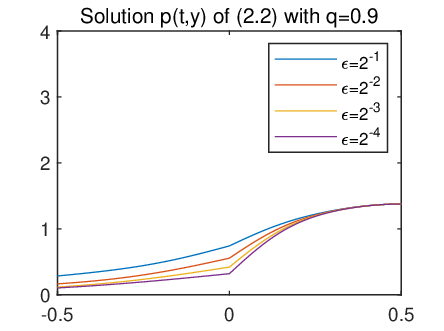}
\caption{Snapshots at $t=0.01$ are identical for $x,y>0$.}
\label{fig1}
\end{figure}

We observe that the boundary values $u(t,0^+)\to0$ as $\eps\to0$. However, we could not decrease $\eps$ further when we solve \eqref{4.1} numerically. It requires a smaller mesh size. On the other hand, solving \eqref{2.1} is handier as long as the domain for $y<0$ is taken properly. The boundary values $u(t,0^+)$ are displayed as $\eps\to0$ in the left of Figure \ref{fig2}. The boundary value $u(t,0^+)$ is computed for 50 cases of $\eps\in(10^{-4},10^0)$ in a log scale. We can observe that they make a line of slope $k=0.39$ for the case $q=0.9$. This implies that the relation between the boundary value and the parameter $\eps$ is approximately
\[
u(t,0^+)=O(\eps^{k})\ \ \text{with}\ \ k\cong0.39 \ \ \text{if}\ \ q=0.9.
\]
On the other hand, the graph for the case $q=0.6$ is slightly curved. If we take the parameter regime $10^{-4}<\eps<10^{-2}$, we have
\[
u(t,0^+)=O(\eps^{k})\ \ \text{with}\ \ k\cong0.082 \ \ \text{if}\ \ q=0.6.
\]
This shows that the boundary value goes to zero slowly when $q$ is close to $0.5$. In the third figure, the power $k$ is computed for 50 cases of $q\in(0.5,1)$. We can see that $k\to0$ as $q\to0.5$ almost linearly. The numerical observations agree with our decay estimate \eqref{decay}, which is $k \simeq q - 0.5$.

\begin{figure}[h!]
\centering
\includegraphics[width=0.32\textwidth]{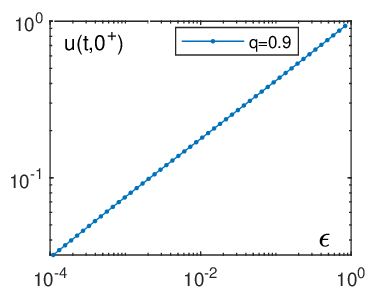}
\includegraphics[width=0.32\textwidth]{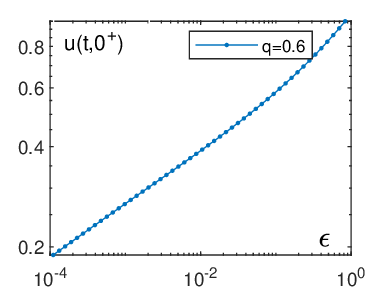}
\includegraphics[width=0.32\textwidth]{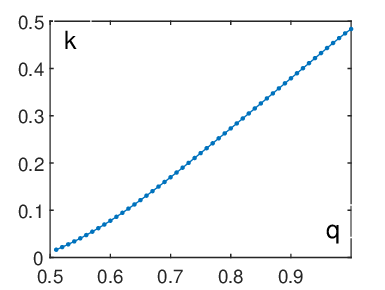}
\caption{Power laws between $u(t,0^+)$ and $\eps$ when $\eps\to0$.}
\label{fig2}
\end{figure}

Four snapshots of solutions of \eqref{2.1} are given in Figure \ref{fig3} when
\[
\phi(x)=1+\sin(x),\quad \Delta x=0.002,\quad \text{and }\ \ q=0.1.
\]
The snapshots are taken at the moment $t=0.1$ for four cases with $\eps=10^{-k}$ with $k=0,1,2,$ and $3$. The left figure is for the solution density $u(t,x)$. The boundary value increases as $\eps\to0$. The second figure is for the gradient of the solution $\partial_x u(t,x)$. We can observe that the derivative $\partial_x u(t,0^+)$ decreases to 0 as $\eps\to0$.

\begin{figure}[h!]
\centering
\includegraphics[width=0.36\textwidth]{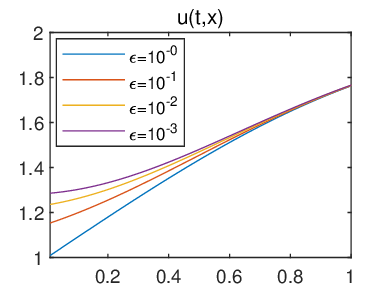}
\includegraphics[width=0.36\textwidth]{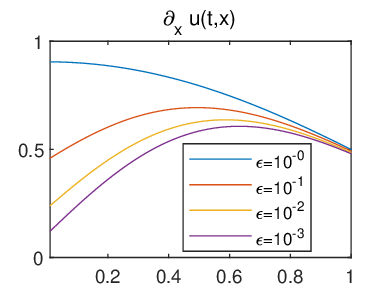}
\caption{Snapshots at $t=0.1$. Slope approaches 0 as $\eps\to0$.}
\label{fig3}
\end{figure}

In the left of Figure \ref{fig4}, the slopes of the solution at the boundary, $\partial_x u(t,0^+)$, are given as $\eps\to0$. The graph consists of 50 cases of $\eps\in(10^{-4},10^0)$ in a log scale. We can observe that they make almost a line of slope $k=0.34$ for the case $q=0.1$. This implies that the relation between the boundary slope and the parameter $\eps$ is
\[
\partial_x u(t,0^+)=O(\eps^{k})\ \ \text{with}\ \ k\cong0.34 \ \ \text{if}\ \ q=0.1.
\]
On the other hand, the graph for the case $q=0.4$ is curved. If we take the parameter regime $10^{-4}<\eps<10^{-3}$, we have
\[
\partial_x u(t,0^+)=O(\eps^{k})\ \ \text{with}\ \ k\cong0.063 \ \ \text{if}\ \ q=0.4.
\]
These numerical simulations show that the boundary slope goes to zero slower when $q$ approaches $0.5$. In the third figure, the power $k$ is computed for 50 cases of $q\in(0,0.5)$. We can see that $k\to0$ as $q\to0.5$ almost linearly. The numerical observations agree with our decay estimate \eqref{decay}, which suggests $k \simeq 0.5 - q$.

\begin{figure}[h!]
\centering
\includegraphics[width=0.32\textwidth]{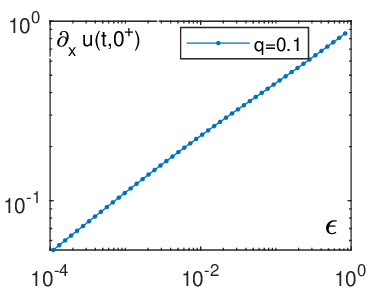}
\includegraphics[width=0.32\textwidth]{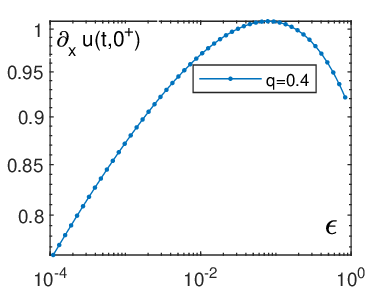}
\includegraphics[width=0.32\textwidth]{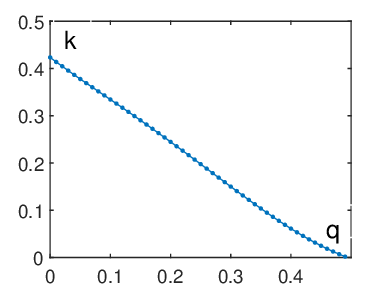}
\caption{Power laws between $\partial_x u(t,0^+)$ and $\eps$ when $\eps\to0$.}
\label{fig4}
\end{figure}

\section{Why is the boundary condition split at $q=0.5$?}\label{sect.why}

Theorem \ref{thm} says that the Neumann boundary condition emerges as $\eps\to0$ when $q<0.5$, and the Dirichlet boundary condition when $q>0.5$. The reason why the boundary condition splits along the case $q=0.5$ can be explained using a random walk model. In a one-dimensional position jump process, particles jump walk length $\tri x$ every sojourn time $\tri t$ toward a randomly chosen direction. If $\tri x$ and $\tri t$ are constant, the diffusivity is constant and given by
\[
D=\frac{(\tri x)^2}{2\tri t}.
\]
If $\tri x$ and $\tri t$ converge to zero keeping the ratio $D$ unchanged, the particle density $u(t,x)$ satisfies a diffusion equation
\begin{equation}\label{HeatE}
u_t=Du_{xx}.
\end{equation}
Equation \eqref{HeatE} is equivalent to \eqref{1.3} for any $q\in\R$ when $D$ is constant. 

To obtain the nonconstant diffusivity $D$ in \eqref{D}, we consider a reversible random walk system taking nonconstant $\tri x$ and $\tri t$. Diffusion equations for such spatially heterogeneous cases have been recently studied in \cite{009,007,008}. Let $\delta>0$ be a small constant, $\ell$ and $\tau$ be positive functions defined on $\R$, and
\[
\tri x\big|_{\text{at } x}=\ell(x)\delta\quad\text{and}\quad \tri t\big|_{\text{at } x}=\tau(x)\delta^2.
\]
Then, the corresponding heterogeneous diffusivity is
\[
D(x)=\frac{\ell^2(x)}{2\tau(x)},
\]
and the diffusion equation obtained after taking $\delta\to0$ limit is
\begin{equation}\label{5.2}
u_t=\frac{1}{2}\big(K\big(Mu\big)_x\big)_x,\quad K=\ell,\ M=\frac{\ell}{\tau}.
\end{equation}
If $\tau$ is constant, the diffusion equation \eqref{5.2} is equivalent to \eqref{1.3} with $q=0.5$. This is why the case $q=0.5$, Wereide's diffusion law, is the border case. The corresponding exponent $q\in\R$ depends on the heterogeneity in $\tri t$, and there is no universal one but different in each case.

There is an essential difference between the sojourn time $\tri t$ and the walk length $\tri x$. For example, a change of $\tri x$ in the region $x<0$ does not make any difference in the other region $x>0$. For example, we have changed the scale of the space variable using the relation \eqref{2.0}, which corresponds to the change of $\tri x$ in the region $x<0$. However, it  does not change the density in the region $x>0$ as we can see in Figure \ref{fig1}. On the other hand, if $\tri t$ changes in the $x<0$ region, the particle density in the $x>0$ region changes. 

Next, we consider the relation between $\tau$ and $q$. Let
\[
\ell(x)=\begin{cases}
\eta,&x\le0,\\
1,&x\ge0,
\end{cases}
\qquad
\tau(x)=\begin{cases}
\zeta,&x\le0,\\
0.5,&x\ge0.
\end{cases}
\]
Then, the diffusivity is given by
\[
D(x)=\begin{cases}
\frac{\eta^2}{2\zeta},&x<0,\\
1,&x>0.
\end{cases}
\]
To obtain the diffusivity in \eqref{D}, we choose $\zeta$ and $\eta$ so that $\frac{\eta^2}{2\zeta}=\eps$. If we take $\zeta=0.5$, the diffusion law \eqref{5.2} corresponds to the case $q=0.5$. For general $q\in\R$, relations among $\eps,\zeta,\eta,$ and $q$ are
\[
\eps^{1-q}=\eta\quad\text{and}\quad \eps^q=\frac{\eta}{\zeta}.
\]
Therefore, $\zeta=\eps^{1-2q}$. In other words, the parameter $\sigma$ is actually the sojourn time $\zeta$ in the $x<0$ region. If $q<0.5$, $\zeta\to0$ as $\eps\to0$. This implies that the particles in $x<0$ region jump so frequently as $\eps\to0$ and hence collide the interface $x=0$ infinitely times more frequently from the region $x<0$. This makes particles in $x>0$ cannot leave the region after taking the limit as $\eps\to0$, which gives the Neumann boundary condition. On the other hand, if $q>0.5$, $\zeta\to\infty$ as $\eps\to0$. In this case, particles do not hit the interface $x=0$ from the left side almost surely. Therefore, there is no chance for a particle that left the $x>0$ region to return to it. This gives the zero-Dirichlet boundary condition. Note that the diffusion law \eqref{5.2} is behind the match between $\zeta$ and $\sigma$, and the exact bisection of the divided boundary condition regimes across $q=0.5$. It is clear that there can be only one diffusion law that can provide such an exact match.

\subsection*{Acknowledgements}
J.C. was supported by the KERI Primary research program of MSIT/NST (No. 23A01002). Y.J.K was supported by National Research Foundation of Korea (NRF-2017R1A2B2010398).

\end{document}